\newcommand{\m}{\mathcal}
\renewcommand{\leq}{\leqslant}
\renewcommand{\geq}{\geqslant}
	\title[]{Failure of almost uniform convergence for noncommutative martingales}
	\subjclass[2020]{46L51, 47A35, 46L55, 47A20}
	\keywords{Noncommutative $L_p$-spaces, Almost uniform convergence, Bilateral almost uniform convergence}
\author{Guixiang Hong}
\author{\'Eric Ricard}
\address{Institute for Advanced Study in Mathematics, Harbin Institute of Technology, Harbin 150001, China.}
\email{gxhong@hit.edu.cn}
\address{UNICAEN, CNRS, LMNO, 14000 Caen, France}
\email{eric.ricard@unicaen.fr}
\theoremstyle{plain}
\newtheorem{thm}{Theorem}[section]
\newtheorem{lem}[thm]{Lemma}
\newtheorem{prop}[thm]{Proposition}
\theoremstyle{definition}
\newtheorem{defn}[thm]{Definition}
\begin{document}

\begin{abstract}
In this paper, we provide a counterexample to show that in sharp contrast to the classical case, the almost uniform convergence may not happen for truly noncommutative $L_p$-martingales when $1\leq p<2$. The same happens to ergodic averages. The proof consists of some sharp estimates of the distributional function of a sequence of matrices and some non standard transference techniques, which might admit further applications.

\end{abstract}

\maketitle


\section{Introduction}\label{sec:intro}
On the truly noncommutative measure spaces $(\mathcal M, \tau)$, the
investigation of the notion `almost everywhere convergence' dates back
to the birth of noncommutative integration theory
(cf. \cite{Seg52}[Definition 2.3]), see also \cite{Pad67}[Theorem
3.1], \cite{Rad73}. But the first significant result was obtained by
Lance \cite{Lan76}[Theorem 5.7], where he proved that the ergodic
averages $$M_nx:=M_n(T)(x)=\frac1n\sum^{n-1}_{k=0}T^kx$$ with initial
data $x\in \mathcal M=L_\infty(\mathcal M)$ converge {\it almost
  uniformly} (a.u. in short) when $T$ is a trace preserving
$*$-automorphism; more
precisely, there exists one $\hat{x}\in \mathcal M$ such that for any
$\varepsilon>0$ , there exists some projection $e\in \m M$ such that
    \begin{align}\label{au}\tau(1-e)<\varepsilon \qquad \qquad \textrm{and }\quad \quad \lim_{n\to \infty} \| (M_nx-\hat{x}) e\|_\infty =0.\end{align}
    Later on, in 1977 Yeadon \cite{Yea77}[Theorem 2] showed that the
    ergodic averages of a unital trace preserving map with initial
    data $x\in L_1(\mathcal M)$ converge {\it bilaterally almost
      uniformly}, that is, there exists one
    $\hat{x}\in L_1(\mathcal M)$ such that for any $\varepsilon>0$ ,
    there exists some projection $e\in \m M$ such that
    \begin{align}\label{bau}\tau(1-e)<\varepsilon \qquad \qquad \textrm{and }\quad \quad \lim_{n\to \infty} \| e(M_nx-\hat{x}) e\|_\infty =0;\end{align}
    and thus recovered the classical famous individual (or pointwise)
    ergodic theorem of Birkhoff for integrable functions. Yeadon's
    bilateral almost uniform convergence was also partially motivated
    by Cuculescu's fundamental result \cite{Cuc71}[Proposition 6],
    where any noncommutative bounded $L_1$-martingale was showed to
    converge bilaterally almost uniformly. As in the classical
    setting, all these (bilateral) almost uniform convergence results
    were deduced from some noncommutative maximal inequalities. Around
    thirty years later, along with several breakthroughs made on
    noncommutative maximal inequalities \cite{Pis98, Jun02, JuXu07,
      HJP16}, it is now well-known that the almost uniform convergence
    of ergodic averages is still true for initial data
    $x\in L_p(\mathcal M)$ with $p\geq2$ and the same happens for
    bounded $L_p$-martingales with $p\geq2$.  Bilateral almost
    uniform convergence holds when $p\geq 1$.

    Note that the notion `almost uniform convergence' is stronger than
    `bilateral almost uniform convergence', it is then natural to
    pursue the almost uniform convergence of ergodic averages or
    bounded $L_p$-martingales for $p<2$. As far as we know, several
    experts have been trying hard to solve this problem in a
    positive way. But in this paper, we provide a counterexample to
    show that in sharp contrast to the classical case, the almost
    uniform convergence can fail for truly noncommutative
    $L_p$-martingales when $1\leq p<2$.

\begin{thm}\label{thm:nonau}
  Let $1\leq p<2$. There exists a probability space
  $(\mathcal M,\tau)$ and a sequence of von Neumann subalgebras
  $\mathcal M_n$ with associated conditional expectations
  $\mathcal E_n$ and $\mathcal X\in L_p(\mathcal M)$ such that
  $(\mathcal E_n(\mathcal X))_{n\geq 0}$ does not converge almost uniformly.
\end{thm}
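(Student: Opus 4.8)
The plan is to produce a single explicit martingale-difference series and read off the failure directly from an operator-norm lower bound on its tails. I would fix a tracial probability space $(\m M,\tau)$, a filtration $\m M_0\subset\m M_1\subset\cdots$ with conditional expectations $\m E_n$, and build $\m X=\sum_{k\geq 1}d_k$ where each $d_k=\lambda_k v_k$ is a martingale difference ($\m E_{k-1}d_k=0$) supported "in the $k$-th stage'' of the filtration, so that $\m E_n(\m X)=\sum_{k\leq n}d_k$ and the relevant tail is $A_n:=\m X-\m E_n(\m X)=\sum_{k>n}\lambda_k v_k$. Since the excerpt already records that bilateral almost uniform convergence holds for all $p\geq 1$ and that (one-sided) almost uniform convergence holds for $p\geq 2$ and for bounded martingales, two design constraints are forced on me: $\m X$ must lie in $L_p(\m M)\setminus L_2(\m M)$ (otherwise the positive results apply), and the obstruction to a.u. convergence must be genuinely one-sided, i.e.\ $\|A_ne\|_\infty$ must stay large while $\|eA_ne\|_\infty\to 0$.

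\textbf{The construction and $L_p$-membership.} I would take each $v_k$ to be a partial isometry (a matrix-unit-type element), so that $q_k:=v_k^*v_k$ is a projection and $\|v_k\|_\infty=1$, and I would tune the coefficients $\lambda_k$ and the ambient matrix sizes so that the noncommutative Burkholder--Gundy/Rosenthal inequality places $\m X$ in $L_p$ through its \emph{diagonal} term: an upper bound of the form $\|\m X\|_p\lesssim_p\bigl(\sum_k\|d_k\|_p^p\bigr)^{1/p}+(\text{column/row square functions})$, where the first term is finite by construction while the column term is deliberately divergent, reflecting $\m X\notin L_2$. Concretely I expect to need $\sum_k\|d_k\|_p^p<\infty$ together with $\|\m X\|_2^2=\sum_k\lambda_k^2\,\tau(q_k)=\infty$; separating these two conditions is exactly what forces the supports $q_k$ to be ``thin'' (with $\|v_k\|_p\ll\|v_k\|_2$, hence $\tau(q_k)\to 0$) rather than full unitaries, for which $L_p$ and $L_2$ cannot be separated.

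\textbf{The failure mechanism.} The engine of the argument is the elementary chain, valid whenever the cross terms of $A_n^*A_n$ are controlled so that $A_n^*A_n\geq \lambda_k^2 q_k$ for each $k>n$:
\begin{align}
\|A_ne\|_\infty^2=\|eA_n^*A_ne\|_\infty\geq \lambda_k^2\,\|eq_ke\|_\infty=\lambda_k^2\,\|q_ke\|_\infty^2,
\end{align}
combined with the trace estimate, for any projection $e$ with $\tau(1-e)<\varepsilon$,
\begin{align}
\|q_ke\|_\infty^2\geq \frac{\tau(q_keq_k)}{\tau(q_k)}=1-\frac{\tau\bigl(q_k(1-e)\bigr)}{\tau(q_k)}\geq 1-\frac{\tau(1-e)}{\tau(q_k)}.
\end{align}
Thus, for every $k>n$ with $\tau(q_k)\geq 2\varepsilon$ one gets $\|A_ne\|_\infty\geq \lambda_k/\sqrt{2}$. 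If I can guarantee that for each admissible $e$ there remain arbitrarily large indices $k$ on which $q_k$ is not essentially swallowed by $1-e$ and on which $\lambda_k$ is large, then $\limsup_n\|A_ne\|_\infty=\infty$ for \emph{every} projection $e$ of small co-trace, which is precisely the negation of \eqref{au}. The contrast with the bilateral statement \eqref{bau} is transparent: in $\|eA_ne\|_\infty$ the same estimate collapses, since there the small projection $e$ multiplies on both sides.

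\textbf{Main obstacle.} The crux, and the place where the real work lies, is reconciling the two demands that pull in opposite directions. Separating $L_p$ from $L_2$ forces $\tau(q_k)\to 0$; but if the $q_k$ were mutually orthogonal (as in a naive direct sum), then $\sum_k\tau(q_k)<\infty$ and an adversary could choose $1-e=\bigvee_{k>n_0}q_k$ with $\tau(1-e)<\varepsilon$, killing the entire tail and \emph{restoring} a.u.\ convergence. The construction must therefore make the supports $q_k$ \emph{overlap} heavily --- so that no projection of small co-trace can annihilate all but finitely many of them --- while still arranging enough noncommutative cancellation among the $v_k$ that the cross terms neither destroy the lower bound $A_n^*A_n\geq\lambda_k^2q_k$ nor push $\m X$ out of $L_p$. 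Designing this overlapping, partially cancelling configuration of thin projections, and verifying the uniform lower bound over all small-codimension $e$ against the resulting cross terms, is the hard part of the proof; the $L_p$ estimate and the trace inequalities above are then comparatively routine.
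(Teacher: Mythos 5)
Your proposal correctly identifies the shape of the problem --- the obstruction must be one-sided, the element must lie in $L_p\setminus L_2$, and the supports of the differences must be thin yet not simultaneously annihilable by a projection of small co-trace --- but it stops exactly where the proof begins: the ``overlapping, partially cancelling configuration'' and the uniform lower bound over \emph{all} projections $e$ with $\tau(1-e)\leq\varepsilon$ are not constructed, only named as the hard part. Worse, the quantitative mechanism you do propose degenerates in precisely the regime your own analysis forces. Your chain needs, for a fixed small $\varepsilon$, arbitrarily large indices $k$ with $\tau(q_k)\geq 2\varepsilon$; but you have already argued that separating $L_p$ from $L_2$ forces $\tau(q_k)\to 0$, and once $\tau(q_k)<\tau(1-e)$ the bound $\|q_ke\|_\infty^2\geq 1-\tau(1-e)/\tau(q_k)$ is vacuous. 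No estimate of the form ``$q_k$ is fat relative to $1-e$, hence $e$ cannot kill it'' can work one $k$ at a time: an adversarial $e$ can always make each individual thin $q_k$ nearly orthogonal to its range. The required lower bound must be a \emph{collective} statement about the whole family of directions at once, and that is the missing idea.

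For comparison, that collective statement is the actual content of the paper's argument, and it is not a support/trace comparison at all. The paper takes $X_N=\xi_N\xi_N^*$ with $\xi_N=\sum_k e^{(N)}_{k,1}$ and corner-type conditional expectations, so that $\mathbb E_n(X_N)=\eta_n\eta_n^*+(\text{diagonal})$ with $\eta_n=\sum_{k\leq n}e^{(N)}_{k,1}$. If $\|\eta_n\eta_n^*e\|\leq 2\mu$ for all $n$, the eigenvector relation $\eta_n\eta_n^*\eta_n=n\eta_n$ gives $n\eta_n=2\mu\, eU_n\eta_n+n(1-e)\eta_n$ for contractions $U_n$; stacking these identities into one operator $A_N=\sum_n n\eta_n\otimes e_{1,n}$ and estimating $L_p$ quasi-norms for $p<\frac12$ (a triangular-matrix lower bound $\|A_N\|_p\gtrsim N$, a Cauchy--Schwarz bound $O(\mu N^{1/2})$ for the $e$-part, a H\"older bound $O(t^{1/(2p)}N)$ for the $(1-e)$-part) forces $\mu\gtrsim N^{1/2}$ for small $t$, uniformly over $e$. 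The normalization $N^{\frac1p-1}$ then makes this blow up like $N^{\frac1p-\frac12}$, and a random-sign trick transfers the single-block bound to the infinite sum. Some quantitative ingredient of this collective kind is what your sketch would need to supply; as it stands the proposal is a statement of the difficulty rather than a proof.
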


Some results about a.u. convergence for martingale Hardy spaces have
been shown in \cite{HJP16} (see the definitions there). When
$1\leq p<2$, if
$\mathcal X\in H_p^c(\mathcal M)\subset L_p(\mathcal M)$ then
$(\mathcal E_n(\m X))_{n\geq 0}$ converges a.u. to $\mathcal X$. Since
$L_p(\mathcal M)=H_p^c(\mathcal M)+H_p^r(\mathcal M)$, $1<p<2$, it
follows that there exists $\mathcal X\in H_p^r(\mathcal M)$ such that
$(\mathcal E_n(\m X))_{n\geq 0}$ does not converges a.u. (but $(\mathcal E_n(\m X^*))_{n\geq 0}$
does).

With more effort, we show that ergodic averages may also fail to converge almost
uniformly when $1\leq p<2$ see Section \ref{sec:erg}. Together with the noncommutative Banach principle (cf. \cite{HLW21}[Section 6]), one concludes also that the strong asymmetric maximal inequalities  cannot be true for ergodic averages when $1\leq p<2$, see e.g. \cite{HJP16} for more information on martingales.

\section{Preliminaries}


Let $(\m M,\tau)$ be a semi-finite von Neumann algebra. As usual, we
denote by $L_0(\m M,\tau)$ (or simply $L_0(\m M)$) the set of
$\tau$-measurable operators with respect to $\tau$ and by $L_p(\m M, \tau)$
(or simply $L_p(\m M)$)
the noncommutative $L_p$-spaces associated to $(\m M,\tau)$ for
$0<p\leq \infty$ ($L_\infty(\m M)=\m M$). We refer the reader to \cite{PiXu03, Xu} for measurable operators and noncommutative $L_p$-spaces.

We recall the notion of  almost uniform convergence (see e.g. \cite{Pad67}[Theorem 3.1], \cite{Lan76, Cuc71}) just for convenience.

\begin{defn}\label{def:au}
  Let  $(Y_n)_{n\geq 0}$ be a sequence in $ L_0(\m M)$. It converges almost uniformly if there exists some $Y\in L_0(\m M)$ such that for any $\varepsilon>0$ , there exists some projection $e\in \m M$ with
    $$\tau(1-e)<\varepsilon \qquad \qquad \textrm{and }\quad \quad \lim_{n\to \infty} \| (Y_n-Y) e\|_\infty =0.$$
     \end{defn}

We will use the associated notion of non-increasing rearrangement for a sequence (cf. \cite{CaRi}):
 
\begin{defn}
  Given a sequence $(Y_n)_{n\geq 0}$ in $L_0(\m M)$, we define the non-increasing function  $\mu_{\cdot}^c: \mathbb R^{+}\to [0,\infty]$ by, for $t>0$
  $$  \mu_t^c((Y_n)_{n\geq 0})= \inf_{\tau(1-e)\leq t}  \sup_{n\geq 0} \| Y_ne\|,$$
  where the infimum runs over all projections $e\in \m M$.
\end{defn}

This definition a priori depends on the ambient algebra $(\m M,\tau)$
and we should emphasize it by using $\mu_t^{\m M, c}$. Actually it can
be checked that if $(\m N,\tau')$ is another algebra containing $\m M$
with $\tau'_{|\m M}=\tau$ then
$$\mu_t^{\m N, c} ((Y_n)_{n\geq 0})\leq \mu_t^{\m M, c} ((Y_n)_{n\geq 0})
\leq C \mu_{\frac t 2}^{\m N, c} ((Y_n)_{n\geq 0}),$$ for some universal
$C>0$, any $t>0$ and any sequence $(Y_n)_{n\geq 0}$ in $L_0(\m M)$.
Thus the dependence on the algebra is rather mild and we drop it (we
will always stay in the same algebra anyway).

      We will freely use the usual inequality
      $$\mu_{t+s}^c ( (Y_n)_{n\geq 0}+(X_n)_{n\geq 0})\leq \mu_t^{c} ((Y_n)_{n\geq 0})+ \mu_s^{c} ((X_n)_{n\geq 0}).$$

The connection with  almost  uniform convergence is given by 
\begin{lem}\label{lem:finity au}
  Let $Y=(Y_n)_{n\geq 0}\subset L_p(\mathcal M)$ for some $0<p\leq \infty$. If
  $Y$ converges almost uniformly, then for any $t>0$, one has
  $\mu^c_t(Y)<\infty$.
\end{lem}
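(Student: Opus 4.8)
The plan is to fix $t>0$ and exhibit a single projection $e\in\m M$ with $\tau(1-e)\leq t$ for which $\sup_{n\geq 0}\|Y_ne\|_\infty<\infty$; this directly bounds the infimum defining $\mu_t^c(Y)$ and yields the claim. The decomposition to exploit is $Y_n=(Y_n-Y)+Y$, where $Y\in L_0(\m M)$ is the almost uniform limit: the first summand is controlled by the convergence hypothesis, and the second by $\tau$-measurability of $Y$.

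First I would apply the definition of almost uniform convergence with $\varepsilon=t/4$ to obtain a projection $e_1$ with $\tau(1-e_1)<t/4$ and $\lim_{n\to\infty}\|(Y_n-Y)e_1\|_\infty=0$. In particular there is an index $N$ such that $\|(Y_n-Y)e_1\|_\infty\leq 1$ for all $n\geq N$, so the tail of the difference is uniformly bounded on $e_1$. Next, since $Y\in L_0(\m M)$ is $\tau$-measurable, I would choose a spectral projection $e_2$ (for instance $e_2=\chi_{[0,\lambda]}(|Y|)$ with $\lambda$ large) satisfying $\tau(1-e_2)<t/4$ and $\|Ye_2\|_\infty<\infty$.

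The remaining point, which is the only genuine obstacle, is that almost uniform convergence only controls large $n$, so the finitely many operators $Y_0,\dots,Y_{N-1}$ must be treated separately. Each $Y_n$ lies in $L_p(\m M)\subset L_0(\m M)$ and is thus $\tau$-measurable, so for every $n<N$ there is a projection $f_n$ with $\tau(1-f_n)<t/(2N)$ and $\|Y_nf_n\|_\infty<\infty$; setting $e_3=\bigwedge_{n<N}f_n$ gives $\tau(1-e_3)\leq\sum_{n<N}\tau(1-f_n)<t/2$ together with $\|Y_ne_3\|_\infty<\infty$ for each $n<N$. I would then take $e=e_1\wedge e_2\wedge e_3$, which satisfies $\tau(1-e)\leq\tau(1-e_1)+\tau(1-e_2)+\tau(1-e_3)<t$, using $1-e_1\wedge e_2\wedge e_3=(1-e_1)\vee(1-e_2)\vee(1-e_3)$ and subadditivity of $\tau$ on the join.

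To conclude, observe that $e\leq e_i$ forces $e_ie=e$, whence $\|Y_ne\|_\infty=\|(Y_ne_i)e\|_\infty\leq\|Y_ne_i\|_\infty$ for the appropriate index. For $n\geq N$ the triangle inequality together with $e=e_1e=e_2e$ gives $\|Y_ne\|_\infty\leq\|(Y_n-Y)e_1\|_\infty+\|Ye_2\|_\infty\leq 1+\|Ye_2\|_\infty$, while for $n<N$ we have $\|Y_ne\|_\infty\leq\|Y_ne_3\|_\infty<\infty$; taking the supremum over $n$ shows $\sup_{n\geq 0}\|Y_ne\|_\infty<\infty$ and hence $\mu_t^c(Y)<\infty$. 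Alternatively, one may avoid explicit index-splitting by invoking the stated subadditivity $\mu_t^c(A+B)\leq\mu_{t/2}^c(A)+\mu_{t/2}^c(B)$ with $A=(Y_n-Y)_{n\geq 0}$ and $B=(Y)_{n\geq 0}$ the constant sequence, reducing the problem to the finiteness of $\mu_{t/2}^c((Y)_{n\geq 0})=\mu_{t/2}(Y)$, immediate from $\tau$-measurability, and of $\mu_{t/2}^c((Y_n-Y)_{n\geq 0})$, which still requires the tail-plus-initial-segment argument above.
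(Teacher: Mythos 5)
Your proof is correct. The paper itself omits the argument, stating only that the lemma ``can be verified easily by the definitions,'' and your decomposition $Y_n=(Y_n-Y)+Y$ with a tail projection from almost uniform convergence, a spectral projection for the $\tau$-measurable limit $Y$, and separate projections for the finitely many initial terms is exactly the routine verification the authors have in mind.
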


This can be verified easily by the definitions, we omit the details.

\section{Proof of Theorem \ref{thm:nonau}}

Let $N\in\mathbb N^*$. Consider the noncommutative probability space $M_N$, the algebra of matrices of dimension $N$, equipped with the normalized trace $\tau_N$, and the von Neumann subalgebras $M_n\oplus \ell_\infty^{{N-n}}$ for $1\leq n\leq N$ with the associated conditional expectation $\mathbb{E}_n$.

Consider a big algebra encoding all $(M_N, \tau_N)$,
$$(\mathcal M,\tau)=L_\infty(\{\pm1\}^\mathbb N)\overline{\bigotimes}\left(\bigotimes^\infty_{N=1}(M_N, \tau_N)\right),$$
and the associated von Neumann subalgebras for $n\geq 1$
$$\mathcal M_n=L_\infty(\{\pm1\}^\mathbb N)\overline{\bigotimes}\left(\bigotimes^n_{N=1}M_N\right)\overline{\bigotimes}\left(\bigotimes^\infty_{N=n+1}(M_n\oplus \ell_\infty^{{N-n}})\right)$$
with conditional expectations denoted by $\mathcal E_n$. We set
$\m M_0$ to be the copy of $L_\infty(\{\pm1\}^\mathbb N)$ with conditional expectation $\m E_0$.

In the $N^{\rm th}$ factor $(M_N, \tau_N)$, we denote by $e_{i,j}^{(N)}$ the usual matrix units. Let $X_N=\xi_N \xi^*_N\in M_N$ where $\xi_N=\sum^N_{k=1}e_{k,1}^{(N)}$. It is easy to check that $\|X_N\|_1=1$. To get the result in $L_p$ when $1\leq p<2$, we consider the normalization
$X_{p,N}=N^{\frac 1 p -1} X_N$, so that $\|X_{p,N}\|_p=1$.

The elements that we will be interested in are, for $1\leq p<2$
$$\mathcal X_p=\sum^\infty_{N=1}\varepsilon_N\frac{1}{N^2}X_{p,N!},$$
where $\varepsilon_n$ are the coordinates of $\{\pm1\}^\mathbb N$.
We will consider the martingale associated to  the filtration $(\mathcal M_n)^\infty_{n=0}$ and $\mathcal X_p\in L_p(\mathcal M)$.

\begin{prop}\label{prop:key estimate}
 There exist some  $t'>0$ and $\delta>0$  such that for all $0<t\leq t'$ and  $N\geq1 $,
\begin{align}\label{key estimate}
\mu^{c}_{t}( (\mathbb E_n(X_N))_{1\leq n\leq N})\geq \delta N^{\frac12},
\end{align}
and moreover for $1\leq p<2$ and $t\leq t'$, 
\begin{align}\label{equal8}
\mu^{c}_{\frac t 2}\big((\m E_n(\mathcal X_p))_{n\geq 0}\big)=\infty.
\end{align}
\end{prop}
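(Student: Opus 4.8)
My plan is to establish \eqref{key estimate} by reducing the operator‑norm maximal function of $(\mathbb E_n(X_N))_n$ to a weighted maximal function of partial sums, and then to deduce \eqref{equal8} by isolating the $N$‑th block of $\mathcal X_p$ with the conditional expectation onto the $(N!)$‑th tensor factor.

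First I record the shape of the conditional expectations. Since $X_N=\sum_{i,j}e^{(N)}_{i,j}$ is the all‑ones matrix, $\mathbb E_n(X_N)$ keeps the $n\times n$ corner and only the diagonal of the complementary block, so on $\mathbb C^N$ one has $\mathbb E_n(X_N)=n\,u_nu_n^*+(1-q_n)$, where $q_n$ is the coordinate projection onto the first $n$ vectors and $u_n=n^{-1/2}\mathbf 1_{[1,n]}$ is the normalized indicator. As $u_n\in\operatorname{ran}q_n$, the cross terms vanish and $\mathbb E_n(X_N)^2=n^2u_nu_n^*+(1-q_n)$, whence for any projection $e$,
\[
\|\mathbb E_n(X_N)e\|^2=\|e\,\mathbb E_n(X_N)^2e\|\ge n^2\|eu_n\|^2,\qquad\text{i.e.}\qquad \|\mathbb E_n(X_N)e\|\ge\sqrt n\,\|e\,\mathbf 1_{[1,n]}\|.
\]
Thus if $C:=\sup_{1\le n\le N}\|\mathbb E_n(X_N)e\|$, then $\|e\,\mathbf 1_{[1,n]}\|^2\le C^2/n$ for every $n$.

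The heart of part one is a counting argument. Fix $e$ with $\tau_N(1-e)\le t$ and, for an increasing integer sequence $0=m_0<\dots<m_L=N$, set $h_j=\mathbf 1_{(m_{j-1},m_j]}$; these are orthogonal with $\|h_j\|^2=m_j-m_{j-1}$ and $\|eh_j\|\le\|e\mathbf 1_{[1,m_j]}\|+\|e\mathbf 1_{[1,m_{j-1}]}\|\le 2C/\sqrt{m_{j-1}}$. With $P_W=\sum_j\|h_j\|^{-2}h_jh_j^*$ the rank‑$L$ projection onto their span and $\mathrm{Tr}$ the unnormalized trace,
\[
L-N\tau_N(1-e)\le \mathrm{Tr}(eP_W)=\sum_j\frac{\|eh_j\|^2}{\|h_j\|^2}\le\sum_j\frac{4C^2}{m_{j-1}(m_j-m_{j-1})}.
\]
The decisive and delicate point is the choice of scales: dyadic gaps would only force $\sim\log N$ of dimension into $\operatorname{ran}(1-e)$, which is far too weak; instead I take the \emph{balanced} partition $m_j-m_{j-1}\approx 8C^2/m_{j-1}$, making every summand $\le\tfrac12$ while $L\approx N^2/(16C^2)$ (these gaps are $\ge1$ unless $C^2>N/8$, in which case the conclusion is immediate). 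The displayed inequality then gives $N\tau_N(1-e)\ge L/2\approx N^2/(32C^2)$, i.e. $C^2\ge N/(32\,\tau_N(1-e))\ge N/(32t)$. Hence $\mu^c_t\big((\mathbb E_n(X_N))_{1\le n\le N}\big)\ge(32t)^{-1/2}N^{1/2}$, which is \eqref{key estimate} with $t'$ any fixed constant and $\delta=(32t')^{-1/2}$.

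For \eqref{equal8}, write $c_N=N^{-2}(N!)^{1/p-1}$, so $\mathcal E_n(\mathcal X_p)=\sum_M\varepsilon_M c_M\,\mathcal E_n(X_{M!})$. Let $\mathbb E^{(N)}$ be the trace‑preserving conditional expectation onto $\mathcal A:=L_\infty(\{\pm1\}^{\mathbb N})\,\overline{\otimes}\,(M_{N!},\tau_{N!})$ that integrates out all matrix factors except the $(N!)$‑th; it commutes with every $\mathcal E_n$ and kills each $X_{M!}$ ($M\ne N$) down to $\tau_{M!}(X_{M!})=1$, so for $n\le N!$
\[
\mathbb E^{(N)}\big(\mathcal E_n(\mathcal X_p)\big)=\varepsilon_N c_N\,\mathbb E_n(X_{N!})+R_N,\qquad R_N:=\sum_{M\ne N}\varepsilon_M c_M\in L_\infty(\{\pm1\}^{\mathbb N}),\ \ \|R_N\|_\infty\le\textstyle\sum_M c_M=:S<\infty.
\]
Using monotonicity of $\mu^c$ under the (filtration‑commuting) conditional expectation $\mathbb E^{(N)}$, the algebra‑comparison estimate of the Preliminaries (applied for $\mathcal A\subset\mathcal M$ and for $M_{N!}\subset\mathcal A$, the source of the factors $t/2$), unitary invariance of $\mu^c$ to discard the unitary $\varepsilon_N$, and subadditivity to discard the bounded constant $R_N$, one obtains for $t\le t'$ (after adjusting $t'$ by a fixed factor)
\[
\mu^c_{t/2}\big((\mathcal E_n(\mathcal X_p))_{n\ge0}\big)\ \ge\ \tfrac{1}{C}\,c_N\,\mu^{M_{N!},c}_{t'}\big((\mathbb E_n(X_{N!}))_{n}\big)-\tfrac{S}{C}\ \ge\ \tfrac{\delta}{C^2}\,c_N (N!)^{1/2}-\tfrac{S}{C}.
\]
Since $1\le p<2$ gives $\tfrac1p-\tfrac12>0$, we have $c_N(N!)^{1/2}=N^{-2}(N!)^{1/p-1/2}\to\infty$, and as $N$ is arbitrary the right–hand side is unbounded, forcing $\mu^c_{t/2}\big((\mathcal E_n(\mathcal X_p))_{n\ge0}\big)=\infty$.

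The step I expect to be the main obstacle is precisely the monotonicity of the column maximal $\mu^c$ under the conditional expectation $\mathbb E^{(N)}$. The natural route is to transfer a near‑optimal projection $e\in\mathcal M$ for $(\mathcal E_n\mathcal X_p)$ to the spectral projection $\bar e=\mathbf 1_{[1/2,1]}\!\big(\mathbb E^{(N)}(e)\big)\in\mathcal A$, for which $\tau(1-\bar e)\le2\tau(1-e)$, and to control $\|\,\mathbb E^{(N)}(\mathcal E_n\mathcal X_p)\,\bar e\,\|$ via the module property $\mathbb E^{(N)}(\mathcal E_n\mathcal X_p)\,\bar e=\mathbb E^{(N)}\big(\mathcal E_n(\mathcal X_p)\bar e\big)$ together with the Kadison–Schwarz inequality; the genuinely subtle part is that a good projection need not a priori live in the subalgebra, so one must exploit that the reduced sequence lands inside $\mathcal A$ and that $\mathbb E^{(N)}$ is an average of conjugations by unitaries commuting with $\mathcal A$ (equivalently, that $\mathbb E^{(N)}\otimes\mathrm{id}$ is a contraction on the relevant $\ell_\infty^c$‑valued space). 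Everything else—the explicit diagonalization of $\mathbb E_n(X_N)$, the balanced‑scale counting, the unitary and subadditivity reductions—is elementary once this transfer principle is in place.
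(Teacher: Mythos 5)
Your derivation of \eqref{equal8} does not close. It rests on the ``monotonicity of the column maximal function $\mu^c$ under the conditional expectation $\mathbb E^{(N)}$'', which you yourself flag as the main obstacle but do not prove --- and which is false in general. A minimal illustration: in $L_\infty[0,1]\overline{\otimes}M_2$ take $x=\lambda 1_A\otimes e_{2,1}$ with $|A|$ small and let $\mathbb E$ be the expectation onto $1\otimes M_2$. The projection $e=1-1_A\otimes e_{1,1}$ satisfies $\tau(1-e)=|A|/2$ and $xe=0$, so $\mu^c_t((x))=0$ for $t\geq |A|/2$; yet $\mathbb E(x)=\lambda|A|\,e_{2,1}$, and any projection $q$ with $\tau(1-q)<\tfrac14$ has $\|e_{2,1}q\|=\|(1\otimes e_{1,1})q\|\geq\tfrac12$ (since $\tau(q)\leq \|(1\otimes e_{1,1})q\|^2\tau(q)+\tfrac12$), so $\mu^c_t((\mathbb E(x)))\geq\lambda|A|/2$ is as large as one likes. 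Neither of your repair routes fixes this: the module property gives $\mathbb E^{(N)}(Z_n)\bar e=\mathbb E^{(N)}(Z_n\bar e)$, but you control $\|Z_ne\|$, not $\|Z_n\bar e\|$, and the cross term $\mathbb E^{(N)}(Z_n(1-e)\bar e)$ has no operator-norm bound; and the average-of-conjugations identity only yields $\|\mathbb E^{(N)}(Z_n)f\|\leq\|Z_nf\|$ for projections $f$ already in $\mathcal A$, after which you would need $\mu^{\mathcal A,c}_t((Z_n))\lesssim\mu^{\mathcal M,c}_{t/2}((Z_n))$ for $Z_n=\mathcal E_n(\mathcal X_p)$, a sequence that does \emph{not} lie in $\mathcal A$; the comparison recalled in the Preliminaries applies only to sequences living in the smaller algebra. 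The paper avoids conditional expectations altogether by symmetrization: the sign-flip automorphism $\pi_{N!}$ gives the exact identity $\varepsilon_{N!}(\mathcal X_p-\pi_{N!}(\mathcal X_p))=\tfrac{2}{N^2}X_{p,N!}$, and $\mu^c_t$ \emph{is} invariant under trace-preserving $*$-automorphisms and under left multiplication by unitaries, and is subadditive in $(t,\text{sequence})$; this isolates the $N$-th block at the cost of a factor $2$ in $t$, which is precisely why \eqref{equal8} is stated at level $t/2$.

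Your proof of \eqref{key estimate}, on the other hand, is a genuinely different and essentially viable route. The reduction $\|\mathbb E_n(X_N)e\|\geq \sqrt n\,\|e\,\mathbf 1_{[1,n]}\|$ extracts the same information as the paper's $\|Y_ne\|\leq 2\mu_t$ with $Y_n=\eta_n\eta_n^*$, but you then run a purely Hilbert-space dimension count against the projection onto the span of the blocks $h_j$, whereas the paper tensorizes with $M_N$, splits $A_N=B_N+C_N$, and plays the $p$-triangle inequality in $L_p$ for $p<\tfrac12$ against the triangular-truncation norm of Lemma \ref{lem:lpnormtri}; your version is more elementary and needs no quasi-norm computations. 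Three corrections are needed, though. First, the case analysis is inverted: the balanced gaps $8C^2/m_{j-1}$ fall below $1$ precisely when $C^2<N/8$ (already for $m_{j-1}$ near $N$); in that regime you should switch to unit gaps on $[8C^2,N]$, where each singleton still contributes at most $4C^2/m_{j-1}\leq\tfrac12$, giving $L\geq N-8C^2$ and hence $C^2\gtrsim N$ once $t\leq\tfrac14$ --- so the argument survives, but not as written. Second, the quantitative conclusion $C^2\geq N/(32t)$ for all small $t$ cannot be right (for $t<1/(32N)$ it would force $C>N=\|X_N\|$); only $C\geq\delta N^{1/2}$ survives, which is all the Proposition requires. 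Third, the projection $e$ must be allowed to range over $\mathcal M$, not just $M_N$, for the estimate to feed into \eqref{equal8}; your counting still works with $\mathrm{Tr}$ replaced by $N\tau$, since $h_j^*eh_j$ is supported under $e^{(N)}_{1,1}\otimes 1$.
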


We put off the proof of Proposition \ref{prop:key estimate} to next
section.  One may now conclude Theorem \ref{thm:nonau} directly from
Proposition \ref{prop:key estimate} thanks to Lemma \ref{lem:finity
  au}.

\section{Proof of the estimates}

\begin{proof}[Proof of Proposition \ref{prop:key estimate}]
Assuming \eqref{key estimate}, we first conclude \eqref{equal8}.

Define the trace preserving $*$-automorphism on $\mathcal M$ induced by
$\pi_N: \varepsilon_n\rightarrow (-1)^{\delta_{n=N}} \varepsilon_n$ on the $L_\infty(\{\pm1\}^\mathbb N)$ component, still denoted by $\pi_N$. Then one has the identity
\begin{align}\label{identity}\frac{2}{N^2}  X_{p,N!} = \varepsilon_{N}(\mathcal X_p-\pi_{N}(\mathcal X_p)).\end{align}
Therefore, we deduce that
\begin{align*}
\mu^c_t \big((\m E_n(\frac 2 {N^2} X_{p,N!}))_{n\geq 0}\big)&\leq \mu^c_t\big((\m E_n(\m X_p-\pi_{N}(\m X_p))_{n\geq 0}\big)\\
&\leq\mu^c_{\frac t 2}\big((\m E_n(\m X_p))_{n\geq 0}\big)+\mu^c_{\frac t 2}\big((\m E_n(\pi_{N}(\m X_p))_{n\geq 0}\big)=2\mu^c_{\frac t 2}\big((\m E_n(\m X_p))_{n\geq 0}\big),
\end{align*}
where we used that $\pi_{N}$ and $\mathcal E_n$ commute. Recall that
$X_{p,N!}=(N!)^{\frac 1 p-1}X_{N!}$, then applying  \eqref{key estimate} for  $t<t'$,
$$\mu^c_{\frac t 2}\big((\m E_n(\m X_p))_{n\geq 0}\big)\geq \delta (N!) ^{\frac 1p-\frac 12}N^{-2},$$
which yields \eqref{equal8} by letting $N\rightarrow\infty$.

\medskip

Now let us establish \eqref{key estimate}.  Fix $t\in ]0,1[$ and denote
$$\mu_t=\mu^c_{t}\big( (\mathbb E_n (X_N))_{n\geq 0}\big).$$ By the definition, there exists some projection $e\in\mathcal M$ such that for all $n\geq 0$
\begin{align}\label{defimu} \tau(1-e)\leq t\;\qquad \mathrm{and}\qquad  \|\mathbb E_n(X_N)e\|\leq 2\mu_t.\end{align}
Recall $X_N=\xi_N\xi^*_N$ with $\xi_N=\sum^N_{k=1}e_{k,1}^{(N)}$. By
easy computations, for $n\leq N$,
$$\mathbb E_n(X_N)= \sum^n_{k,l=1}e_{k,l}^{(N)}+\sum_{k=n+1}^N e_{k,k}:=Y_n+D_n.$$
Note that $Y_n=\eta_n\eta_n^*$ where $\eta_n=\sum^n_{k=1}e_{k,1}^{(N)}$. By multiplying by the projection $p_n=\sum_{k=1}^n e_{k,k}^{(N)}$ from the left, one gets for any $1\leq n\leq N$,
$$\|Y_ne\|\leq2\mu_t.$$
Now one may write, \begin{align*}
Y_n=Y_n(1-e)+Y_ne=Y_n(1-e)+2\mu_t U_n^*e=2\mu_t eU_n+(1-e)Y_n,
\end{align*}
where $U_n$ is some contraction in $\mathcal M$ and the last equality follows from the self-adjointness of $Y_n$. 

Then $Y_n\eta_n=n\eta_n$ yields
$$n \eta_n =2\mu_t e U_n\eta_n +n (1-e)\eta_n.$$
In $\m M \otimes M_N$, we set
\begin{align*}
A_N:=\sum^N_{n=1}n\eta_n \otimes e_{1,n}&=2\mu_t (e\otimes e_{1,1})\big(\sum^{N}_{n=1}U_n \eta_n \otimes e_{1,n}\big)\\
&\quad\quad+((1-e)\otimes e_{1,1})\big (\sum^N_{n=1} n\eta_n \otimes e_{1,n}\big)=:B_N+C_N.
\end{align*}

We need some estimates that we postpone to the next section. We use ${\rm tr}_N$ for the usual trace on $M_N$ with ${\rm tr}_N (1)=N$:
\begin{lem}\label{lem:lpnorm}
Let $0<q<\frac 12$. Then, for some constants $C_q,c_q>0$ there hold
\begin{enumerate}[{\rm (i)}]
\item $\quad\quad\quad\quad\quad\quad\quad\quad\quad\quad\displaystyle{\|A_N\|_{L_q((\mathcal M,\tau)\otimes (M_N,{\rm tr}_N))}\geq c_q N,}$
\item $\quad\quad\quad\quad\quad\quad\quad\quad\quad\quad\displaystyle \|B_N\|_{L_q((\mathcal M,\tau)\otimes (M_N,{\rm tr}_N))}\leq 2 \mu_t{N}^{\frac12},$
\item $\quad\quad\quad\quad\quad\quad\quad\quad\quad\quad\displaystyle \|C_N\|_{L_q((\mathcal M,\tau)\otimes (M_N,{\rm tr}_N))}\leq C_qt^{\frac 1{2q}}N.$
  \end{enumerate}
\end{lem}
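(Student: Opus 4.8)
The plan is to exploit that for each $Z\in\{A_N,B_N,C_N\}$ the operator $ZZ^*$ (or $Z^*Z$) factors through the rank-one matrix unit $e_{1,1}$ of the outer $M_N$, which collapses the computation of $\|Z\|_p$ to an $L_p(\m M)$-computation. Using $\eta_m^*\eta_n=\min(m,n)\,e_{1,1}^{(N)}$ and $\eta_n\eta_n^*=\sum_{k,l\le n}e_{k,l}^{(N)}$, a direct multiplication gives
\begin{align*}
A_N^*A_N=e_{1,1}^{(N)}\otimes G,\qquad A_NA_N^*=H\otimes e_{1,1},
\end{align*}
where $D=\mathrm{diag}(1,\dots,N)$, $K=\big(\min(i,j)\big)_{i,j}$ is the $\min$-matrix, $G=DKD$, and $H\in M_N$ has entries $H_{k,l}=\sum_{n\ge\max(k,l)}n^2$; note that $G$ and $H$ share the same nonzero eigenvalues. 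Since $(T\otimes e_{1,1})^{p/2}=T^{p/2}\otimes e_{1,1}$ for $T\ge0$ and ${\rm tr}_N(e_{1,1})=1$, I obtain the master identities $\|A_N\|_p^p=\frac1N\,{\rm tr}_N(G^{p/2})$ (using $\tau(e_{1,1}^{(N)})=1/N$) and, more generally, $\|Z\|_p^p=\tau(T^{p/2})$ whenever $ZZ^*=T\otimes e_{1,1}$ with $T\ge0$ in $\m M$.

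For (i) I would bound $G$ below by a diagonal. The elementary bound $K\ge\frac14 I$ holds because, writing $y_n=\sum_{m\ge n}x_m$, one has $\langle Kx,x\rangle=\sum_n y_n^2$ while $\|x\|^2=\sum_n(y_n-y_{n+1})^2\le4\sum_n y_n^2$; hence $G=DKD\ge\frac14 D^2$. As $t\mapsto t^{p/2}$ is operator monotone ($p/2<1$), $G^{p/2}\ge 4^{-p/2}D^p=4^{-p/2}\mathrm{diag}(n^p)$, so ${\rm tr}_N(G^{p/2})\ge 4^{-p/2}\sum_{n=1}^N n^p\ge c_p N^{p+1}$, and therefore $\|A_N\|_p\ge c_p N$.

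For (ii) the same computation yields $B_NB_N^*=4\mu_t^2\,(eRe)\otimes e_{1,1}$ with $R=\sum_{n=1}^N(U_n\eta_n)(U_n\eta_n)^*\ge0$; since each $U_n$ is a contraction, $\tau(R)\le\sum_n\tau(\eta_n^*\eta_n)=\sum_n\frac nN\le N$. Thus $\|B_N\|_p^p=(2\mu_t)^p\,\tau\big((eRe)^{p/2}\big)$, and a single H\"older step (using $p/2<1$ and $\tau(e)\le1$) gives $\tau\big((eRe)^{p/2}\big)\le\tau(eRe)^{p/2}\le\tau(R)^{p/2}\le N^{p/2}$, i.e. $\|B_N\|_p\le2\mu_t N^{1/2}$.

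For (iii), set $f=1-e$, so $\tau(f)\le t$; the analogous computation gives $C_NC_N^*=(fHf)\otimes e_{1,1}$ and hence $\|C_N\|_p^p=\tau\big((fHf)^{p/2}\big)$. As $fHf$ is a compression of $H$ supported under $f$, its singular value function obeys $\mu_u(fHf)\le\mu_u(H)$ and vanishes for $u>\tau(f)$, so
\begin{align*}
\|C_N\|_p^p=\int_0^{\tau(f)}\mu_u(fHf)^{p/2}\,du\le\int_0^{t}\mu_u(H)^{p/2}\,du.
\end{align*}
The decisive input, and the main obstacle, is the eigenvalue decay $\mu_u(H)\le CN^2/u^2$, equivalently $\lambda_j(H)=\lambda_j(G)\le CN^4/j^2$. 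I would derive it from $\lambda_j(DKD)\le\|D\|^2\lambda_j(K)=N^2\lambda_j(K)$ together with the spectral bound $\lambda_j(K)\le CN^2/j^2$ for the $\min$-matrix; the latter is equivalent to the lower bound $\lambda_j(K^{-1})\ge c\,j^2/N^2$ for the tridiagonal discrete Laplacian $K^{-1}=\mathrm{tridiag}(-1,2,-1)$ (with a Neumann corner), which can be read off from its explicit Jacobi spectrum or obtained by comparison with the continuous Laplacian. Granting this, and using $p<\frac12$ so that $\int_0^t u^{-p}\,du=\frac{t^{1-p}}{1-p}$, the integral is at most $C_p N^p t^{1-p}$, whence $\|C_N\|_p\le C_p N\,t^{(1-p)/p}$; since $t<1$ and $(1-p)/p\ge\frac1{2p}$ for $p\le\frac12$, this yields $\|C_N\|_p\le C_p N\,t^{1/(2p)}$, as claimed.
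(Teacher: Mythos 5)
Your proof is correct, but it follows a genuinely different route from the paper's, especially in parts (i) and (iii). The paper never touches the spectrum of $A_N$: it reduces both the lower bound (i) and the upper bound on $\|A_N\|_{2p}$ needed for (iii) to the $L_p$-quasi-norm of the triangular all-ones matrix $T_n=\sum_{i\leq j}e_{i,j}$ (Lemma \ref{lem:lpnormtri}), whose lower estimate comes from the identity $T_n-ST_n=\mathrm{Id}_n$ plus the $p$-triangle inequality and whose upper estimate from a dyadic self-similarity recursion; (iii) is then a single H\"older step $\|C_N\|_p\leq\|(1-e)\otimes e_{1,1}\|_{2p}\|A_N\|_{2p}\leq C_p\,t^{\frac1{2p}}N$. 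You instead compute $A_N^*A_N=e_{1,1}^{(N)}\otimes DKD$ and $A_NA_N^*=H\otimes e_{1,1}$ exactly and work with the spectrum of the min-matrix $K$. For (i) this buys a very clean argument ($K\geq\frac14 I$ plus operator monotonicity of $s\mapsto s^{p/2}$), arguably more elementary than the paper's. For (iii) it costs more: you need the full eigenvalue decay $\lambda_j(K)\leq CN^2/j^2$ (equivalently, the spectral lower bound for the discrete Neumann Laplacian $K^{-1}$), which is classical and correctly invoked but is a strictly stronger input than the single quasi-norm bound $\|A_N\|_{2p}\leq C_pN$ the paper uses; in exchange you obtain the marginally better power $t^{(1-p)/p}$ before weakening it to $t^{1/(2p)}$ (a step that needs $t\leq1$, harmless since $\tau$ is a state and the Proposition fixes $t\in\,]0,1[$). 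Part (ii) is essentially the paper's H\"older argument in squared form. The individual steps all check out: $\eta_m^*\eta_n=\min(m,n)e_{1,1}^{(N)}$, the identity $\|Z\|_p^p=\tau(T^{p/2})$ when $ZZ^*=T\otimes e_{1,1}$, the bound $\tau\big((eRe)^{p/2}\big)\leq\tau(R)^{p/2}$ for $\tau(e)\leq1$, and $\mu_u(fHf)\leq\mu_u(H)\,1_{[0,\tau(f)]}(u)$.
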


Fix $q<\frac 12$, we may get thanks to the $q$-triangle inequality,
$\|A_N\|_q^q\leq \|B_N\|_q^q+\|C_N\|_q^q$. Thus, for some constants $d_q$, $e_q>0$,
\begin{align}\label{inter1}
N^q\leq d_q   \mu_t^q N^{\frac q2}+ e_q t^{\frac 12} N^{q}.
\end{align}
One may take $t'$ verifying $1 - e_q t'^{\frac 12} =\frac12$. For $t\leq t'$, we deduce from \eqref{inter1} that for some constant $f_q>0$, $\mu_t\geq f_q N^\frac12$.
\end{proof}

\section{Proof of Lemma  \ref{lem:lpnorm} }

Now let us prove Lemma \ref{lem:lpnorm} relying on

\begin{lem}\label{lem:lpnormtri}
  Let $T_n=\displaystyle{ \sum_{1\leq i\leq j\leq n} e_{i,j}\in M_n}$ for $n\geq 1$. Then for all $0<q< 1$,
$$   \Big(\frac {2n} {1-2^{q-1}}\Big)^{\frac 1 q} \geq \|T_n\|_{L_q(M_n, {\rm tr}_n)} \geq \Big(\frac n 2\Big)^{\frac 1 q}.   $$ 
\end{lem}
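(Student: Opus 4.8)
The plan is to work entirely with the Schatten $p$-quasinorm and reduce both inequalities to a single quantity. Writing $a_n:=\|T_n\|_{L_p(M_n,\mathrm{tr}_n)}^p=\sum_{k=1}^n s_k(T_n)^p$, where $s_1(T_n)\geq\cdots\geq s_n(T_n)>0$ denote the singular values of $T_n$ (all positive, since $T_n$ is upper triangular with unit diagonal and hence invertible), the assertion is exactly $\tfrac n2\leq a_n\leq\tfrac{2n}{1-2^{p-1}}$. I would prove the two bounds by quite different means.

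For the lower bound the key point is that every singular value of $T_n$ is at least $\tfrac12$. Indeed, a direct computation gives $T_n^{-1}=I-R$, where $R=\sum_{i=1}^{n-1}e_{i,i+1}$ is the nilpotent superdiagonal shift; since $\|R\|=1$ this yields $\|T_n^{-1}\|\leq 2$, and therefore $s_n(T_n)=\|T_n^{-1}\|^{-1}\geq\tfrac12$. Consequently $a_n=\sum_{k=1}^n s_k(T_n)^p\geq n\,2^{-p}\geq\tfrac n2$, the last inequality using $p\leq 1$. Taking $p$-th roots gives the left-hand estimate.

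For the upper bound I would set up a doubling recursion. Splitting $\{1,\dots,2n\}$ into two halves exhibits the block form $T_{2n}=\left(\begin{smallmatrix}T_n & J_n\\ 0 & T_n\end{smallmatrix}\right)$, where $J_n$ is the $n\times n$ all-ones matrix (every entry of the upper-right block equals $1$, since $i\leq n<j$ there). Writing $T_{2n}$ as its block-diagonal part plus the off-diagonal corner and invoking the $p$-triangle inequality $\|A+B\|_p^p\leq\|A\|_p^p+\|B\|_p^p$ (valid for $0<p<1$) gives $a_{2n}\leq 2a_n+\|J_n\|_p^p$; as $J_n$ has rank one with single singular value $n$, this reads $a_{2n}\leq 2a_n+n^p$. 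Starting from $a_1=1$ and iterating along $n=2^m$, the inhomogeneous terms form a geometric series of ratio $2^{p-1}<1$, summing to $a_{2^m}\leq 2^m\bigl(1+\tfrac{1}{2(1-2^{p-1})}\bigr)$.

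It remains to pass from dyadic to arbitrary $n$, and here I would use monotonicity: since $T_n$ is a principal submatrix of $T_{n+1}$, singular-value domination $s_k(T_n)\leq s_k(T_{n+1})$ gives $a_n\leq a_{n+1}$. For general $n$ I pick $m$ with $2^{m-1}<n\leq 2^m$, so that $2^m<2n$ and $a_n\leq a_{2^m}\leq 2n\bigl(1+\tfrac{1}{2(1-2^{p-1})}\bigr)=2n+\tfrac{n}{1-2^{p-1}}$. Since $0<p$ forces $2(1-2^{p-1})\leq 1$, the right-hand side is at most $\tfrac{2n}{1-2^{p-1}}$, which is the desired bound. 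I expect the only real care to be needed in this last stage---extracting the sharp constant from the recursion and covering non-dyadic $n$ without losing a factor of two---whereas the block estimate and the smallest-singular-value bound are routine.
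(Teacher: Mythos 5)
Your proof is correct and takes essentially the same route as the paper: the upper bound is the identical doubling recursion $a_{2n}\leq 2a_n+n^p$ with the same constants and the same dyadic comparison, and the lower bound rests on the same observation about the shift (the paper writes $T_n-ST_n=\mathrm{Id}_n$ and applies the $p$-triangle inequality, while you invert it to $T_n^{-1}=I-R$ and bound the smallest singular value by $\tfrac12$; both give $\|T_n\|_p^p\geq n/2$).
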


\begin{proof}
  The lower bound follows from the $q$-triangle inequality using the fact
  that $T_n-ST_n=Id_n$ where $S=\sum_{k=1}^{n-1} e_{k,k+1}$ and $\|S\|=1$.

  For the upper one, let $v_k=2^{-k}\|T_{2^k}\|_{L_q(M_{2^k}, {\rm tr}_{2^k})}^q$. One has $v_0=1$ and cutting $T_{2^{k+1}}$ into 4 pieces, $v_{k+1}\leq  v_k +
  2^{k(q-1)-1}$. Hence we get $v_k\leq \frac 1{1-2^{q-1}}$ for all $k\geq 0$ from which the bound follows comparing $n$ with a power of $2$.
\end{proof}

\begin{proof}
(i) We use basic inequalities and Lemma \ref{lem:lpnormtri},
 \begin{align*}
   \|A_N\|_{L_q((\mathcal M,\tau)\otimes (M_N,{\rm tr}_N))}&=\|\sum^N_{n=1}n\eta_n\otimes e_{1,n}\|_{L_q((M_N,\tau_N)\otimes (M_N,{\rm tr}_N))}\\
                                                     &=\frac{1}{N^{\frac1q}}\|\sum^N_{n=1}n\eta_n\otimes e_{1,n}\|_{L_q((M_N, {\rm tr}_N)\otimes (M_N,{\rm tr}_N))}\\
                                                     &\geq \frac{1}{N^{\frac1q}}\|\sum^N_{n={[\frac N 2]}}n\eta_n\otimes e_{1,n}\|_{L_q((M_N, {\rm tr}_N)\otimes (M_N,{\rm tr}_N))}\\
                                                     &\geq \frac{N}{2N^{\frac1q}}\|\sum^N_{n={[\frac N 2]}}\eta_n\otimes e_{1,n}\|_{L_q((M_N, {\rm tr}_N)\otimes (M_N,{\rm tr}_N))}\\
   \\                                                    &\geq \frac {N^{1-\frac 1 q}} 2 \|T_{[\frac N 2]}\|_{L_q((M_{[\frac N 2]}, {\rm tr}_{[\frac N 2]}))}\geq \frac{N}{2^{1+\frac 2 q}},
                 \end{align*}
 where the last line follows from the fact that  $T_{[\frac N 2]}$ is a sub-matrix of $\sum^N_{n={[\frac N 2]}}\eta_n\otimes e_{1,n}$ and the norm in Schatten classes do not depend on the size of a matrix if one enlarges it.

(ii) Let $r$ such that $\frac1q=\frac1r+\frac12$. Then
\begin{align*}
 \|B_N\|_{L_q((\mathcal M,\tau)\otimes (M_N,{\rm tr}_N))}&\leq 2 \mu_t \|e\otimes e_{1,1}\|_r\|\sum^{N}_{n=1}(U_n \eta_n \otimes e_{1,n})\|_2\\
 &\leq 2 \mu_t\left(\sum^{N}_{n=1}\|\eta_n\|^2_{L_2(\mathcal M,\tau)}\right)^{\frac12}\\
  &= 2 \mu_t\left(\sum^{N}_{n=1}\frac{n}{N}\right)^{\frac12}\leq 2 \mu_t{N}^{\frac12},
 \end{align*}
 where in the second inequality we have used $\|U_n\|_\infty\leq1$.

 (iii) Similar computations as in (i) give that for $q<1$
$$ \|A_N\|_{L_q((\mathcal M,\tau)\otimes (M_N,{\rm tr}_N))}\leq {N^{1-\frac 1 q}}  \|T_{N}\|_{L_q((M_{N}, {\rm tr}_{N}))}\leq N\Big(\frac{2}{1-2^{q-1}}\Big)^{\frac  1 q}.$$
Assuming $q<\frac 1 2$,
\begin{align*}
\|C_N\|_{L_q((\mathcal M,\tau)\otimes (M_N,{\rm tr}_N))}&\leq \|(1-e)\otimes e_{1,1}\|_{2q}\|A_N\|_{2q}\\
&\leq t^{\frac 1{2q}}{N}\Big(\frac{2}{1-2^{2q-1}}\Big)^{\frac 1{ 2 q}}.
\end{align*}
\end{proof}

\section{Ergodic averages \label{sec:erg}}

By an observation by Neveu, extended to the noncommutative setting by Dang-Ngoc \cite{Da} (see also \cite{JuXu07}), one can go from martingales to ergodic averages.

\begin{thm}\label{thm:nonau2}
 Let $1\leq p<2$. There exists a probability space $(\mathcal M,\tau)$, $\m X\in L_p(\m M,\tau)$
and a trace preserving unital completely positive map $T:(\mathcal M,\tau) \to(\mathcal M,\tau)$ such that $(M_n(T)(\mathcal X))_{n\geq 0}$ does not converge
  almost uniformly.
\end{thm}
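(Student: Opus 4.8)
The plan is to transfer the martingale counterexample of Theorem~\ref{thm:nonau} to the ergodic setting by means of the Neveu--Dang-Ngoc dilation alluded to above, and then to conclude with Lemma~\ref{lem:finity au}. Recall that, by Lemma~\ref{lem:finity au}, in order to rule out almost uniform convergence of the ergodic averages it suffices to produce a system in which, for some $t>0$,
$$\mu^c_t\big((M_n(T)(\m X_p))_{n\geq 0}\big)=\infty.$$
Thus the whole task is to propagate the divergence \eqref{equal8} of the martingale maximal quantity to the ergodic averages, and the natural target is a one-sided comparison of non-increasing rearrangements rather than a convergence statement.

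First I would set up the dilation. Starting from the filtration $(\m M_n)_{n\geq 0}$, its conditional expectations $(\m E_n)_{n\geq 0}$ and the element $\m X_p$ of Theorem~\ref{thm:nonau}, the observation of Neveu extended by Dang-Ngoc \cite{Da} (see also \cite{JuXu07}) produces a probability space $(\m N,\tilde\tau)$ that contains $(\m M,\tau)$ trace-preservingly, a trace preserving unital completely positive map $T\colon\m N\to\m N$, and a lift of $\m X_p$ (still denoted $\m X_p$) such that the ergodic averages reproduce the conditional expectations along a subsequence: there are indices $n_k\to\infty$ for which $M_{n_k}(T)(\m X_p)$ agrees with $\m E_k(\m X_p)$ up to a term that can be made uniformly small after a cut by a projection of small co-trace. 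I would arrange the construction so that $T$ is a state-preserving map built from the $\m E_k$, which automatically keeps it unital and completely positive, as required by the statement.

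The decisive step is then the comparison of rearrangements. Since $\mu^c_t$ is an infimum over projections of a supremum over \emph{all} indices, a lower bound obtained along the subsequence $(n_k)$ already forces a lower bound for the full ergodic sequence; this is precisely the direction that the subsequence reproduction supplies. Concretely, I would show
$$\mu^c_{t}\big((M_n(T)(\m X_p))_{n\geq 0}\big)\;\geq\; c\,\mu^c_{c't}\big((\m E_k(\m X_p))_{k\geq 0}\big)$$
for absolute constants $c,c'>0$, by taking any projection $e\in\m N$ controlling $\sup_n\|M_n(T)(\m X_p)e\|_\infty$, pushing it down to a projection $e'\in\m M$ of comparable co-trace that controls $\sup_k\|\m E_k(\m X_p)e'\|_\infty$, and then minimizing. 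Two facilities from Section~2 enter here: the subadditivity $\mu^c_{t+s}((Y_n)+(X_n))\leq\mu^c_t((Y_n))+\mu^c_s((X_n))$, which lets me peel off the error term of the dilation, and the mild dependence of $\mu^c_t$ on the ambient algebra, $\mu^{\m N,c}_t\leq\mu^{\m M,c}_t\leq C\mu^{\m N,c}_{t/2}$, which is needed because we passed from $\m M$ to the larger $\m N$. Combined with \eqref{equal8}, the displayed inequality gives $\mu^c_t\big((M_n(T)(\m X_p))_{n\geq 0}\big)=\infty$ for all small $t$, and Lemma~\ref{lem:finity au} yields the failure of almost uniform convergence.

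The hard part will be the faithful transfer of the \emph{asymmetric} column quantity itself, that is, the passage from a projection $e\in\m N$ to a projection $e'\in\m M$ of comparable co-trace. The delicate issue is that the identification of $M_{n_k}(T)(\m X_p)$ with $\m E_k(\m X_p)$ is only approximate, and the error must be absorbed in the one-sided norm $\|\,\cdot\,e\|_\infty$ rather than merely in $L_p$; I would therefore have to argue that all these error terms can be cut, \emph{uniformly in $k$}, by a single projection of arbitrarily small co-trace, while keeping track of the constants coming from the ambient-algebra comparison above. Once this uniform one-sided control is established, the remaining estimates are routine bookkeeping and the theorem follows.
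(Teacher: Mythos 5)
Your route is the paper's: build $T$ by the Neveu--Dang-Ngoc observation so that the ergodic averages reproduce the conditional expectations along a subsequence $(m_n)$, peel off the error with the subadditivity of $\mu^c$, invoke \eqref{equal8}, and conclude with Lemma \ref{lem:finity au} (using that $\mu^c_t$ of a subsequence is dominated by $\mu^c_t$ of the full sequence). One structural simplification you are missing: no enlargement of the algebra is needed. The paper takes $T=\sum_k(\alpha_{k+1}-\alpha_k)\mathcal E_k$, a convex combination of the conditional expectations acting on $(\mathcal M,\tau)$ itself, so the ambient-algebra comparison $\mu^{\mathcal N,c}_t\leq\mu^{\mathcal M,c}_t\leq C\mu^{\mathcal N,c}_{t/2}$ and the passage from a projection $e\in\mathcal N$ to a projection $e'\in\mathcal M$ are both superfluous.

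The step you defer as ``the hard part'' --- cutting all the errors $Z_n=M_{m_n}(T)(\mathcal X)-\mathcal E_n(\mathcal X)$ by a single projection of small co-trace, uniformly in $n$ --- is genuinely the remaining content, and the key input you do not name is the \emph{summability} of the errors in $L_p$: Lemma 4.2 of \cite{JuXu07} allows one to choose the weights $\alpha_k$ so that $\sum_n\|Z_n\|_p\leq 1$. Granting this, the uniform cut is short: with $e_n=1_{[0,t^{-1/p}]}(|Z_n|)$ one has $\|Z_ne_n\|\leq t^{-1/p}$ and $\tau(1-e_n)\leq t\,\|Z_n\|_p^p$ by the Markov inequality, so $e=\wedge_n e_n$ satisfies $\tau(1-e)\leq t$ and $\|Z_ne\|\leq t^{-1/p}$ for all $n$, whence $\mu^c_t\big((Z_n)_{n\geq 0}\big)\leq t^{-1/p}<\infty$ for every $t>0$. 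A merely ``approximate for each fixed $n$'' reproduction, which is all your write-up guarantees, would not allow this union bound. Once the summability is put into the construction, your displayed comparison of rearrangements (with an additive error $t^{-1/p}$, which is harmless since the martingale side is infinite) follows from subadditivity, and the proof closes exactly as you indicate.
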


\begin{proof}
  We use the construction in Theorem \ref{thm:nonau} where
  $\mathcal X=\m X^*\in (\m M ,\tau)$. We take
  $T=\sum_{n=0}^\infty (\alpha_{n+1}-\alpha_n) \mathcal E_n$, where
  $(\alpha_n)_{n\geq 0}$ is an increasing sequence with $\alpha_0=0$ and
  $\lim \alpha_n=1$. By Lemma 4.2 in \cite{JuXu07}, there exists a
  suitable choice of $(\alpha_n)$ so that for some increasing sequence of integers
  $(m_n)_{n\geq 0}$,
  $\sum_{n=0}^\infty \| M_{m_n}(T)(\m X)- \mathcal E_n(\m X)\|_p\leq
  1$.  Write $Z_n=M_{m_n}(T)(\m X)- \mathcal E_n(\m X)$ and let
  $t>0$. As $Z_n=Z_n^*$, if $e_n=1_{[0,t^{-\frac 1 p}]}(|Z_n|)$, we
  have $\|Z_ne_n\|\leq t^{-\frac 1 p}$ and
  $\tau(1-e_n)\leq t \|Z_n\|^p_p$ by the Markov inequality. Let
  $e=\wedge_{n\geq 0} e_n$, then
  $\tau(1-e)\leq \sum_{n=0}^\infty \tau(1-e_n)\leq t$ and
  $\|Z_ne\|\leq t^{-\frac 1 p}$ for all $n\geq 0$. Thus
  $\mu_t^c\big((Z_n)_{n\geq 0}\big)\leq t^{-\frac 1 p}$ for all
  $t>0$. Since
  $\mu_t^c\big((\mathcal E_n(\m X))_{n\geq 0}\big)=\infty$ for $t$
  small, the same must hold for
  $\mu_t^c\big((M_{m_n}(T)(\m X))_{n\geq 0}\big)\leq \mu^c_t\big((M_{n}(T)(\m
  X))_{n\geq 0}\big)$. We can conclude that $(M_{n}(T)(\m X))_{n\geq 0}$
  does not converge a.u. as before thanks to Lemma \ref{lem:finity au}. 
\end{proof}

Actually, one can also produce $T$ as a trace preserving
$*$-automorphism and thus get a counter-example for the a.u.
convergence in the $L_p$-version of the original Lance theorem for
$1\leq p<2$.  We keep the notation of the previous sections. On $M_N$,
consider the diagonal unitary
$U_N=\sum_{k=1}^N e^{2\pi i K^{k-N-1}}e_{k,k}^{(N)}$, where
$K\in \mathbb N\setminus\{0,1\}$,  will be chosen big enough depending on $N$ later. We take
$T_N$ the conjugation by $U_N$. Let us evaluate the ergodic average
$y=M_{K^n}(T_N)x=\frac 1 {K^n}\sum_{k=0}^{K^n-1} U_N^kxU_N^{*k}$ with
$x=(x_{\alpha,\beta})_{1\leq \alpha,\beta\leq N}$. Clearly,
$y_{\alpha,\alpha}=x_{\alpha,\alpha}$ and if $\alpha\neq \beta$:
$$y_{\alpha,\beta} = \frac 1{K^n}\frac {1-e^{2\pi i (K^{\alpha-1-N}-K^{\beta-1-N})K^n}}
{1-e^{2\pi i (K^{\alpha-1-N}-K^{\beta-1-N})}}
x_{\alpha,\beta}=c_{\alpha,\beta}x_{\alpha,\beta}.$$ Thus if
$\alpha,\beta> N-n$ then clearly $y_{\alpha,\beta}=0$.

We deal with $\alpha\leq N-n<\beta$. There are constant $c_1, c_2>0$ such
that for all $z\in \mathbb C$ with $|z|\leq 4\pi$,
$\frac 1 {c_1}|z|\leq |e^{z}-1|\leq c_2 |z| $, thus we get
$$|c_{\alpha,\beta}|=\frac 1 {K^n}\Big|\frac {1-e^{2\pi i K^{\alpha-1-N+n}}}{1-e^{2\pi i (K^{\alpha-1-N}-K^{\beta-1-N})}}\Big| \leq 2c_1c_2\, K^{\alpha-\beta}\leq 2 c_1c_2 K^{-1}.$$ Similarly for $\beta\leq N-n<\alpha$, one also has $|c_{\alpha,\beta}|\leq 2c_1c_2 K^{-1}$.

When $\alpha,\beta\leq N-n$, we can use that there are constants 
$c_3, c_4>0$ such that for all $z\in \mathbb C$ with $|z|\leq 4\pi$, $\frac 1{c_3}|z|^2\leq |e^{z}-1-z|\leq c_4 |z|^2 $ and basic computations to get that for some $c_5>0$, $  |c_{\alpha,\beta}-1|\leq c_5 K^{-1}$.

For any $x\in L_p(M_N,\tau_N)$, with $C=\max\{2c_1c_2,c_5\}$, by the triangle inequality
$$\|\big(M_{K^n}(T_N)-\mathbb E_{N-n}\big) x\|\leq 
\sum_{\alpha,\beta=1}^{N-n} |c_{\alpha,\beta}-1| |x_{\alpha,\beta}| +
\sum_{\substack{N\geq \max\{\alpha,\beta\}>N-n\\\alpha\neq \beta}} |c_{\alpha,\beta}|
|x_{\alpha,\beta}|\leq C N^2K^{-1}\|x\|_p.$$ Thus by choosing $K$
big enough, we can assume that for $x\in L_p(M_N,\tau_N)$:
$$\sum_{n=0}^N \|\mathbb E_{N-n}(x)-  \frac 1 {K^n}\sum_{k=0}^{K^n-1} U_N^kxU_N^{*k}\|_{p}\leq  \|x\|_p,\qquad   \|U_N-1\|\leq 2^{-N}.$$
Then the product $\prod_{N=1}^\infty U_N$ converges for the norm in $\m
M$ to a unitary $U$. 
The $*$-automorphism $T$ on $(\m M,\tau)$ implemented by $U$
satisfies that $M_n(T)(x)=\frac 1 {n}\sum_{k=0}^{n-1} U_N^kxU_N^{*k}$ for $x\in M_N$. Then, by invoking identity \eqref{identity} and using the similar argument as in the proof of Theorem \ref{thm:nonau2}, the element
$\m X_p\in L_p(\m M)$ satisfies that
$$N^2\mu^c_{\frac t4}\big((M_{n}(T)(\m  X_p))_{n\geq 0}\big)\geq 
\mu^c_{\frac t 2}\big((M_{n}(T)(  X_{p,N!}))_{n\geq 0}\big)\geq
\mu^c_{t}\big((\mathbb E_n(  X_{p,N!}))_{N!\geq n\geq 0}\big)-2^{\frac 1 p}t^{-\frac 1 p}.$$
Finally one concludes with Proposition \ref{prop:key estimate}.

\subsection*{Acknowledgments} The first author was supported by
National Natural Science Foundation of China (No. 12071355,
No. 12325105, No. 12031004) and the second author  by
ANR-19-CE40-0002.

\end{document}